\def\ssign{\textsection\nobreak\hspace{1pt plus 0.3pt}}
\let\origsection=\section 
\def\mysection{\@mystartsection{section}{1}\z@{.7\linespacing\@plus\linespacing}{.5\linespacing}{\normalfont\scshape\centering\ssign}}
\def\section{\@ifstar{\origsection*}{\mysection}}
\def\appendix{\par\c@section\z@ \c@subsection\z@
	\let\sectionname\appendixname
	\let\section=\origsection
	\def\thesection{\@Alph\c@section}} 
\def\@mystartsection#1#2#3#4#5#6{\if@noskipsec \leavevmode \fi
	\par \@tempskipa #4\relax
	\@afterindenttrue
	\ifdim \@tempskipa <\z@ \@tempskipa -\@tempskipa \@afterindentfalse\fi
	\if@nobreak \everypar{}\else
	\addpenalty\@secpenalty\addvspace\@tempskipa\fi
	\@dblarg{\@mysect{#1}{#2}{#3}{#4}{#5}{#6}}}
\def\@mysect#1#2#3#4#5#6[#7]#8{\edef\@toclevel{\ifnum#2=\@m 0\else\number#2\fi}\ifnum #2>\c@secnumdepth \let\@secnumber\@empty
	\else \@xp\let\@xp\@secnumber\csname the#1\endcsname\fi
	\@tempskipa #5\relax
	\ifnum #2>\c@secnumdepth
	\let\@svsec\@empty
	\else
	\refstepcounter{#1}\edef\@secnumpunct{\ifdim\@tempskipa>\z@ \@ifnotempty{#8}{\@nx\enspace}\else
		\@ifempty{#8}{.}{\@nx\enspace}\fi
	}\@ifempty{#8}{\ifnum #2=\tw@ \def\@secnumfont{\bfseries}\fi}{}\protected@edef\@svsec{\ifnum#2<\@m
		\@ifundefined{#1name}{}{\ignorespaces\csname #1name\endcsname\space
		}\fi
		\@seccntformat{#1}}\fi
	\ifdim \@tempskipa>\z@ \begingroup #6\relax
	\@hangfrom{\hskip #3\relax\@svsec}{\interlinepenalty\@M #8\par}\endgroup
	\ifnum#2>\@m \else \@tocwrite{#1}{#8}\fi
	\else
	\def\@svsechd{#6\hskip #3\@svsec
		\@ifnotempty{#8}{\ignorespaces#8\unskip
			\@addpunct.}\ifnum#2>\@m \else \@tocwrite{#1}{#8}\fi
	}\fi
	\global\@nobreaktrue
	\@xsect{#5}}
\renewcommand{\PrintDOI}[1]{\doi{#1}}
\numberwithin{equation}{section}
\let\polishlcross=\l
\def\l{\ifmmode\ell\else\polishlcross\fi}
\let\emptyset=\varnothing
\let\setminus=\smallsetminus
\def\moverlay{\mathpalette\mov@rlay}
\def\mov@rlay#1#2{\leavevmode\vtop{   \baselineskip\z@skip \lineskiplimit-\maxdimen
		\ialign{\hfil$\m@th#1##$\hfil\cr#2\crcr}}}
\newcommand{\charfusion}[3][\mathord]{
	#1{\ifx#1\mathop\vphantom{#2}\fi
		\mathpalette\mov@rlay{#2\cr#3}
	}
	\ifx#1\mathop\expandafter\displaylimits\fi}
\newcommand{\bigdcup}{\charfusion[\mathop]{\bigcup}{\cdot}}
\DeclareFontFamily{U}  {MnSymbolC}{}
\DeclareSymbolFont{MnSyC}         {U}  {MnSymbolC}{m}{n}
\DeclareFontShape{U}{MnSymbolC}{m}{n}{
	<-6>  MnSymbolC5
	<6-7>  MnSymbolC6
	<7-8>  MnSymbolC7
	<8-9>  MnSymbolC8
	<9-10> MnSymbolC9
	<10-12> MnSymbolC10
	<12->   MnSymbolC12}{}
\DeclareMathSymbol{\powerset}{\mathord}{MnSyC}{180}
\newcommand{\pedge}[9]{
	
	\ifx\relax#6\relax
	\def\qoffs{0pt}
	\else
	\def\qoffs{#6}
	\fi
	
	\def\phedge{
		($#1+#5!\qoffs!-90:#2-#5$) -- 
		($#2+#1!\qoffs!-90:#3-#1$) -- 
		($#3+#2!\qoffs!-90:#4-#2$) -- 
		($#4+#3!\qoffs!-90:#5-#3$) -- 
		($#5+#4!\qoffs!-90:#1-#4$) -- cycle}
	
	\coordinate (12) at ($#1!\qoffs!90:#2$);
	\coordinate (15) at ($#1!\qoffs!-90:#5$);
	\coordinate (23) at ($#2!\qoffs!90:#3$);
	\coordinate (21) at ($#2!\qoffs!-90:#1$);
	\coordinate (34) at ($#3!\qoffs!90:#4$);
	\coordinate (32) at ($#3!\qoffs!-90:#2$);
	\coordinate (45) at ($#4!\qoffs!90:#5$);
	\coordinate (43) at ($#4!\qoffs!-90:#3$);
	\coordinate (51) at ($#5!\qoffs!90:#1$);
	\coordinate (54) at ($#5!\qoffs!-90:#4$);
	
	\def\nphedge{
		(15) let \p1=($(15)-#1$), \p2=($(12)-#1$) in 
		arc[start angle={atan2(\y1,\x1)}, delta angle={atan2(\y2,\x2)-atan2(\y1,\x1)-360*(atan2(\y2,\x2)-atan2(\y1,\x1)>0)}, x radius=\qoffs, y radius=\qoffs] --
		(21) let \p1=($(21)-#2$), \p2=($(23)-#2$) in 
		arc[start angle={atan2(\y1,\x1)}, delta angle={atan2(\y2,\x2)-atan2(\y1,\x1)-360*(atan2(\y2,\x2)-atan2(\y1,\x1)>0)}, x radius=\qoffs, y radius=\qoffs] --
		(32) let \p1=($(32)-#3$), \p2=($(34)-#3$) in 
		arc[start angle={atan2(\y1,\x1)}, delta angle={atan2(\y2,\x2)-atan2(\y1,\x1)-360*(atan2(\y2,\x2)-atan2(\y1,\x1)>0)}, x radius=\qoffs, y radius=\qoffs] --
		(43) let \p1=($(43)-#4$), \p2=($(45)-#4$) in 
		arc[start angle={atan2(\y1,\x1)}, delta angle={atan2(\y2,\x2)-atan2(\y1,\x1)-360*(atan2(\y2,\x2)-atan2(\y1,\x1)>0)}, x radius=\qoffs, y radius=\qoffs] --
		(54) let \p1=($(54)-#5$), \p2=($(51)-#5$) in 
		arc[start angle={atan2(\y1,\x1)}, delta angle={atan2(\y2,\x2)-atan2(\y1,\x1)-360*(atan2(\y2,\x2)-atan2(\y1,\x1)>0)}, x radius=\qoffs, y radius=\qoffs] --
		cycle}
	
	\ifx\relax#7\relax
	\def\plwidth{1pt}
	\else
	\def\plwidth{#7}
	\fi
	
	\ifx\relax#9\relax
	\fill \nphedge;
	\else
	\fill[#9]\nphedge;
	\fi
	
	\ifx\relax#8\relax
	\draw[line width=\plwidth,rounded corners=\qoffs]\nphedge;
	\else
	\draw[line width=\plwidth,#8]\nphedge;
	\fi
}
\newcommand{\qedge}[7]{
	
	\ifx\relax#4\relax
	\def\qoffs{0pt}
	\else
	\def\qoffs{#4}
	\fi
	
	\def\qhedge{
		($#1+#3!\qoffs!-90:#2-#3$) --
		($#2+#1!\qoffs!-90:#3-#1$) --
		($#3+#2!\qoffs!-90:#1-#2$) -- cycle}

	\coordinate (12) at ($#1!\qoffs!90:#2$);
	\coordinate (13) at ($#1!\qoffs!-90:#3$);
	\coordinate (23) at ($#2!\qoffs!90:#3$);
	\coordinate (21) at ($#2!\qoffs!-90:#1$);
	\coordinate (31) at ($#3!\qoffs!90:#1$);
	\coordinate (32) at ($#3!\qoffs!-90:#2$);
	
	\def\nqhedge{
		(13) let \p1=($(13)-#1$), \p2=($(12)-#1$) in
		arc[start angle={atan2(\y1,\x1)}, delta angle={atan2(\y2,\x2)-atan2(\y1,\x1)-360*(atan2(\y2,\x2)-atan2(\y1,\x1)>0)}, x radius=\qoffs, y radius=\qoffs] --
		(21) let \p1=($(21)-#2$), \p2=($(23)-#2$) in
		arc[start angle={atan2(\y1,\x1)}, delta angle={atan2(\y2,\x2)-atan2(\y1,\x1)-360*(atan2(\y2,\x2)-atan2(\y1,\x1)>0)}, x radius=\qoffs, y radius=\qoffs] --
		(32) let \p1=($(32)-#3$), \p2=($(31)-#3$) in
		arc[start angle={atan2(\y1,\x1)}, delta angle={atan2(\y2,\x2)-atan2(\y1,\x1)-360*(atan2(\y2,\x2)-atan2(\y1,\x1)>0)}, x radius=\qoffs, y radius=\qoffs] --
		cycle}
	
	\ifx\relax#5\relax
	\def\qlwidth{1pt}
	\else
	\def\qlwidth{#5}
	\fi
	
	\ifx\relax#7\relax
	\fill \nqhedge;
	\else
	\fill[#7]\nqhedge;
	\fi
	
	\ifx\relax#6\relax
	\draw[line width=\qlwidth,rounded corners=\qoffs]\nqhedge;
	\else
	\draw[line width=\qlwidth,#6]\nqhedge;
	\fi
}
\newcommand{\redge}[8]{
	
	\ifx\relax#5\relax
	\def\qoffs{0pt}
	\else
	\def\qoffs{#5}
	\fi
	
	\def\rhedge{
		($#1+#4!\qoffs!-90:#2-#4$) -- 
		($#2+#1!\qoffs!-90:#3-#1$) -- 
		($#3+#2!\qoffs!-90:#4-#2$) -- 
		($#4+#3!\qoffs!-90:#1-#3$) -- cycle}
	
	\coordinate (12) at ($#1!\qoffs!90:#2$);
	\coordinate (14) at ($#1!\qoffs!-90:#4$);
	\coordinate (23) at ($#2!\qoffs!90:#3$);
	\coordinate (21) at ($#2!\qoffs!-90:#1$);
	\coordinate (34) at ($#3!\qoffs!90:#4$);
	\coordinate (32) at ($#3!\qoffs!-90:#2$);
	\coordinate (41) at ($#4!\qoffs!90:#1$);
	\coordinate (43) at ($#4!\qoffs!-90:#3$);
	
	\def\nrhedge{
		(14) let \p1=($(14)-#1$), \p2=($(12)-#1$) in 
		arc[start angle={atan2(\y1,\x1)}, delta angle={atan2(\y2,\x2)-atan2(\y1,\x1)-360*(atan2(\y2,\x2)-atan2(\y1,\x1)>0)}, x radius=\qoffs, y radius=\qoffs] --
		(21) let \p1=($(21)-#2$), \p2=($(23)-#2$) in 
		arc[start angle={atan2(\y1,\x1)}, delta angle={atan2(\y2,\x2)-atan2(\y1,\x1)-360*(atan2(\y2,\x2)-atan2(\y1,\x1)>0)}, x radius=\qoffs, y radius=\qoffs] --
		(32) let \p1=($(32)-#3$), \p2=($(34)-#3$) in 
		arc[start angle={atan2(\y1,\x1)}, delta angle={atan2(\y2,\x2)-atan2(\y1,\x1)-360*(atan2(\y2,\x2)-atan2(\y1,\x1)>0)}, x radius=\qoffs, y radius=\qoffs] --
		(43) let \p1=($(43)-#4$), \p2=($(41)-#4$) in 
		arc[start angle={atan2(\y1,\x1)}, delta angle={atan2(\y2,\x2)-atan2(\y1,\x1)-360*(atan2(\y2,\x2)-atan2(\y1,\x1)>0)}, x radius=\qoffs, y radius=\qoffs] --
		cycle}
	
	\ifx\relax#6\relax
	\def\rlwidth{1pt}
	\else
	\def\rlwidth{#6}
	\fi
	
	\ifx\relax#8\relax
	\fill \nrhedge;
	\else
	\fill[#8]\nrhedge;
	\fi
	
	\ifx\relax#7\relax
	\draw[line width=\rlwidth,rounded corners=\qoffs]\nrhedge;
	\else
	\draw[line width=\rlwidth,#7]\nrhedge;
	\fi
}
\let\phi=\varphi
\let\epsilon=\varepsilon
\let\eps=\epsilon
\let\rho=\varrho
\let\theta=\vartheta
\def\NN{{\mathds N}}
\def\RR{{\mathds R}}
\newtheoremstyle{note}  {4pt}  {4pt}  {\sl}  {}  {\bfseries}  {.}  {.5em}          {}
\newtheoremstyle{introthms}  {3pt}  {3pt}  {\itshape}  {}  {\bfseries}  {.}  {.5em}          {\thmnote{#3}}
\newtheoremstyle{remark}  {2pt}  {2pt}  {\rm}  {}  {\bfseries}  {.}  {.3em}          {}
\theoremstyle{plain}
\newtheorem{theorem}{Theorem}
\newtheorem{lemma}[theorem]{Lemma}
\newtheorem{prop}[theorem]{Proposition}
\newtheorem{cor}[theorem]{Corollary}
\newtheorem{obs}[theorem]{Observation}
\theoremstyle{note}
\newtheorem{definition}[theorem]{Definition}
\theoremstyle{remark}
\newtheorem{question}[theorem]{Question}
\newcommand*\patchAmsMathEnvironmentForLineno[1]{
	\expandafter\let\csname old#1\expandafter\endcsname\csname #1\endcsname
	\expandafter\let\csname oldend#1\expandafter\endcsname\csname end#1\endcsname
	\renewenvironment{#1}
	{\linenomath\csname old#1\endcsname}
	{\csname oldend#1\endcsname\endlinenomath}}
\newcommand*\patchBothAmsMathEnvironmentsForLineno[1]{
	\patchAmsMathEnvironmentForLineno{#1}
	\patchAmsMathEnvironmentForLineno{#1*}}
\def\ex{\text{\rm ex}}
\newsavebox\vdegbox
\savebox\vdegbox{\tikz{
		\draw[black,fill=black] (90:1) circle (.35);
		\draw[black,line width=0.10cm] (210:1) circle (.30);
		\draw[black,line width=0.10cm] (330:1) circle (.30);
		\draw[opacity=0] (0:1.2) circle (0.1);
}}
\newsavebox\pdegbox
\savebox\pdegbox{\tikz{
		\draw[black,line width=0.10cm] (90:1) circle (.30);
		\draw[black,fill=black] (210:1) circle (.35);
		\draw[black,fill=black] (330:1) circle (.35);
		\draw[black,line width=0.28cm ] (210:1) -- (330:1);
		\draw[opacity=0] (0:1.2) circle (0.1);
}}
\newsavebox\vvvbox
\savebox\vvvbox{\tikz{
		\draw[black,fill=black] (90:1) circle (.35);
		\draw[black,fill=black] (210:1) circle (.35);
		\draw[black,fill=black] (330:1) circle (.35);
		\draw[opacity=0] (0:1.2) circle (0.1);
}}
\newsavebox\evbox
\savebox\evbox{\tikz{
		\draw[black,fill=black] (90:1) circle (.35);
		\draw[black,fill=black] (210:1) circle (.35);
		\draw[black,fill=black] (330:1) circle (.35);
		\draw[black,line width=0.28cm ] (210:1) -- (330:1);
		\draw[opacity=0] (0:1.2) circle (0.1);
}}
\newsavebox\eebox
\savebox\eebox{\tikz{
		\draw[black,fill=black] (90:1) circle (.35);
		\draw[black,fill=black] (210:1) circle (.35);
		\draw[black,fill=black] (330:1) circle (.35);
		\draw[black,line width=0.28cm ] (90:1) -- (330:1);
		\draw[black,line width=0.28cm ] (90:1) -- (210:1);
		\draw[opacity=0] (0:1.2) circle (0.1);
}}
\newsavebox\eeebox
\savebox\eeebox{\tikz{
		\draw[black,fill=black] (90:1) circle (.35);
		\draw[black,fill=black] (210:1) circle (.35);
		\draw[black,fill=black] (330:1) circle (.35);
		\draw[black,line width=0.28cm ] (90:1) -- (330:1);
		\draw[black,line width=0.28cm ] (90:1) -- (210:1);
		\draw[black,line width=0.28cm ] (210:1) -- (330:1);
		\draw[opacity=0] (0:1.2) circle (0.1);
}}
\newcommand{\overrighharpoonup}[1]{\ThisStyle{%
		\vbox {\m@th\ialign{##\crcr
				\rightharpoonupfill \crcr
				\noalign{\kern-\p@\nointerlineskip}
				$\hfil\SavedStyle#1\hfil$\crcr}}}}
\def\rightharpoonupfill{%
	$\SavedStyle\m@th\mkern+0.8mu\cleaders\hbox{$\shortbar\mkern-4mu$}\hfill\rightharpoonuptip\mkern+0.8mu$}
\def\rightharpoonuptip{%
	\raisebox{\z@}[2pt][1pt]{\scalebox{0.55}{$\SavedStyle\rightharpoonup$}}}
\def\shortbar{%
	\smash{\scalebox{0.55}{$\SavedStyle\relbar$}}}
\newcommand{\overlefharpoonup}[1]{\ThisStyle{%
		\vbox {\m@th\ialign{##\crcr
				\leftharpoonupfill \crcr
				\noalign{\kern-\p@\nointerlineskip}
				$\hfil\SavedStyle#1\hfil$\crcr}}}}
\def\leftharpoonupfill{%
	$\SavedStyle\m@th\mkern+0.8mu\cleaders\hbox{$\shortbar\mkern-4mu$}\hfill\leftharpoonuptip\mkern+0.8mu$}
\def\leftharpoonuptip{%
	\raisebox{\z@}[2pt][1pt]{\scalebox{0.55}{$\SavedStyle\leftharpoonup$}}}
\newsavebox\myboxA
\newsavebox\myboxB
\newlength\mylenA
\newcommand*\xoverline[2][0.75]{%
	\sbox{\myboxA}{$\m@th#2$}%
	\setbox\myboxB\null
	\ht\myboxB=\ht\myboxA%
	\dp\myboxB=\dp\myboxA%
	\wd\myboxB=#1\wd\myboxA
	\sbox\myboxB{$\m@th\overline{\copy\myboxB}$}
	\setlength\mylenA{\the\wd\myboxA}
	\addtolength\mylenA{-\the\wd\myboxB}%
	\ifdim\wd\myboxB<\wd\myboxA%
	\rlap{\hskip 0.5\mylenA\usebox\myboxB}{\usebox\myboxA}%
	\else
	\hskip -0.5\mylenA\rlap{\usebox\myboxA}{\hskip 0.5\mylenA\usebox\myboxB}%
	\fi}
\DeclareSymbolFont{symbolsC}{U}{txsyc}{m}{n}
\DeclareMathSymbol{\strictif}{\mathrel}{symbolsC}{74}
\DeclareMathSymbol{\strictfi}{\mathrel}{symbolsC}{75}
\tikzset{vtx/.style={inner sep=1.7pt, outer sep=0pt, circle, fill=black,draw}}
\DeclareMathOperator{\conv}{conv}
\DeclareMathOperator{\cov}{cov}
\DeclareMathOperator{\Cov}{Cov}
\renewcommand{\tilde}{\widetilde}
\title{On Relative Ordered Tur\'an Density}
\author[D. King]{Dylan King}
\address{California Institute of Technology, Pasadena, USA}
\email{dking@caltech.edu}
\author[B. Lidick\'y]{Bernard Lidick\'y}
\address{Department of Mathematics, Iowa State University, Ames IA, USA}
\email{lidicky@iastate.edu}
\thanks{The second author was supported by NSF DMS-2152490 and Scott Hanna Professorship.}
\author[M. Ouyang]{Minghui Ouyang}
\address{School of Mathematical Sciences, Peking University, Beijing, China}
\email{ouyangminghui1998@gmail.com}
\author[F. Pfender]{Florian Pfender}
\address{University of Colorado, Denver, USA}
\email{florian.pfender@ucdenver.edu}
\thanks{The fourth author was supported by NSF DMS-2152498.}
\author[R. Wang]{Runze Wang}
\address{Department of Mathematical Sciences, University of Memphis, Memphis TN, USA}
\email{rwang6@memphis.edu}
\author[Z. Xiang]{Zimu Xiang}
\address{Department of Mathematics, University of Illinois Urbana-Champaign, Urbana IL, USA}
\email{zimux2@illinois.edu}
\thanks{The sixth author was supported in part by NSF RTG DMS-1937241.}
\begin{document}

\begin{abstract}
	For an ordered graph~$F$, denote the Tur\'an density by $\vec{\pi}(F)$. The relative Tur\'an density, denoted by $\rho(F)$, is the supremum over $\alpha \in [0,1]$ such that every ordered graph $G$ contains an $F$-free subgraph~$G'$ with~$e(G') \geq \alpha e(G)$. Reiher, R\"odl, Sales and Schacht~\cite{RRSS25} showed that $\rho(P) = \vec{\pi}(P)/2$ and $\rho(K) = \vec{\pi}(K)$ for any ascending path $P$ or clique $K$. They asked if there are any ordered graphs~$F$ with~$\vec{\pi}(F)/2 < \rho(F) < \vec{\pi}(F)$. We answer this question in the affirmative by describing a family of such~$F$. We also show that the relative Tur\'an densities of a large family of ordered matchings (including $\{\{1,6\}, \{2,3\}, \{4,5\}\}$ and $\{\{1,3\}, \{2,5\}, \{4,6\}\}$) are~$0$.
\end{abstract}


\maketitle

\section{Introduction}
For a graph $F$ and an integer $n$, the extremal number $\ex(n,F)$ is the maximum number of edges in an $F$-free $n$-vertex graph. Determining this number precisely is a challenge for most graphs $F$, and researchers have focused on the leading term by studying the Tur\'an density of $F$ defined as
\begin{align}
	\pi(F) = \lim_{n\to\infty} \frac{\ex(n,F)}{\binom{n}{2}}.\label{eq:dpi}
\end{align}
The Tur\'an densities of graphs are well understood through the formula
\begin{align}
    \pi(F) = 1 - \frac{1}{\chi(F)-1}\label{eq:pi},
\end{align}
due to Erd\H os, Stone and Simonovits~\cites{ES46, ES66}.

Pach and Tardos~\cite{PT06} developed an analogue of~\eqref{eq:pi} for ordered graphs. In this setting, each graph is equipped with a linear ordering of its vertex set, and every subgraph inherits the induced ordering. The extremal number for ordered graphs, denoted by~$\vec{\ex}(n, F)$, is defined as the maximum number of edges in an $n$-vertex ordered graph that contains no copy of $F$ (as an ordered subgraph). Just as in \eqref{eq:dpi}, one can define the \emph{ordered Tur\'an density}, 
	\[ \vec{\pi}(F) = \lim_{n\to\infty}\frac{\vec{\ex}(n,F)}{\binom{n}{2}}\,. \]
For an ordered graph $F$, the \emph{interval chromatic number}, denoted by $\chi_{<}(F)$, is the smallest $k$ such that $F$ has a proper $k$-vertex-coloring where each color class induces an interval in the vertex ordering. The aforementioned analogue of~\eqref{eq:pi}, established by Pach and Tardos~\cite{PT06}, states that
\begin{align}
    \vec{\pi}(F) = 1 - \frac{1}{\chi_{<}(F)-1} \label{eq:vpi}\,.
\end{align}
Observe that while the chromatic number of a graph is notoriously difficult to determine, the interval chromatic number is easily determined by a greedy search considering the vertices in order. Clearly, $\chi_<(F)\ge \chi(F)$ and therefore $\vec{\pi}(F)\ge \pi(F)$, and every graph has orderings where equality holds.

Let us return briefly to the unordered setting. A well known probabilistic argument shows that for any graph~$F$, every graph~$G$ contains a subgraph~$G'\subseteq G$ which is~$(\chi(F)-1)$-partite (and therefore~$F$-free) with~$e(G') \geq \pi(F) \cdot e(G)$. Furthermore, by considering the case~$G=K_n$ for large~$n$, this property fails when~$\pi(F)$ is replaced by any larger number. Reiher, R\"odl, Sales and Schacht~\cite{RRSS25} introduced the following definition.

\begin{definition}
	Given an ordered graph $F$, the \emph{relative Turán density} of $F$,~$\rho(F)$, is 
		\[ \sup\{ \alpha \in [0,1] \colon \text{every ordered $G$ has an $F$-free subgraph $G'$ with } e(G') \geq \alpha e(G)\}\,. \]
\end{definition}

Our preceding discussion shows that $\rho(F)=\pi(F)$ for unordered graphs, but in the ordered case we find more nuanced behavior. By considering again~$G=K_n$ for large~$n$ it follows that~$\rho(F) \leq \vec{\pi}(F) = \frac{\chi_<(F)-2}{\chi_<(F)-1}$, with equality whenever~$\chi(F)=\chi_<(F)$.

Let~$P_k$ be the monotone path on~$k$ vertices and~$\ell(F)$ denote the number of vertices\footnote{Here we depart from the notation used in~\cite{RRSS25} so that~$\ell(P_k) = \chi_<(P_k)=k$.} in the longest monotone path in~$F$. 

Reiher, R\"odl, Sales and Schacht~\cite{RRSS25} showed that

\begin{align}
	\rho(F) \geq \frac{\ell(F)-2}{2(\ell(F)-1)}\,.\label{eq:l}
\end{align}

For $P_k$, since $\ell(P_k) = \chi_{<}(P_k)$, the lower bound \eqref{eq:l} specifies to  $\rho(P_k) \geq \vec{\pi}(P_k)/2$. In this case (the primary result of their article) they proved equality $\rho(P_k) = \vec{\pi}(P_k)/2$. As noted above, any unordered graph $F$ has an ordering such that $\rho(F) = \pi(F)=\vec{\pi}(F)$. They~\cite{RRSS25} asked whether there are any ordered graphs $F$ satisfying 
\begin{align}
	\vec{\pi}(F)/2 < \rho(F) < \vec{\pi}(F)\,. \label{eq:question}
\end{align}
Our first result is a family of ordered graphs satisfying~\eqref{eq:question} which we introduce now. For  $a \geq 2$ and $b\geq1$, let $Q_{a,b}$ be the graph obtained from the monotone path on vertices $\{1,\dots,1+a+b\}$ by adding the edge $\{1,1+a\}$. To analyze~$\rho(Q_{a,b})$ it will be necessary to identify large ordered graphs~$G$ which are difficult (in the sense of edge deletion) to cleanse of~$Q_{a,b}$; we introduce these now. For $a \geq 2$ and $n-1$ a multiple of $a$, let $B_{a,n}$ be the union of monotone paths on vertices $\{1,2,\dots,n\}$ and $\{1,a+1,2a+1,\dots,n\}$. See Figure~\ref{fig:QB} for an example of $Q_{2,2}$ and $B_{2,9}$.
 
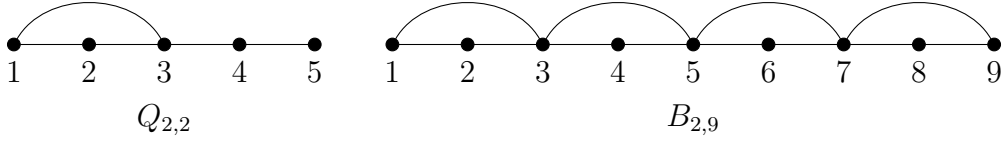
\begin{figure}[ht]
	\begin{center}
		\begin{tikzpicture}
    		\draw 
    		\foreach \i in {1,2,3,4,5}{
        		(\i,0)node[vtx,label=below:$\i$](v\i){}
    		}
    		(v1)--(v2)--(v3)--(v4)--(v5)
    		(v1) to[bend left=60](v3)
    		;
    		\draw (3,-1) node{$Q_{2,2}$};
		\end{tikzpicture}
		\hskip 1em
		\begin{tikzpicture}
    		\draw 
    		\foreach \i in {1,...,9}{
    			(\i,0)node[vtx,label=below:$\i$](v\i){}
    		}
    		(v1)--(v2)--(v3)--(v4)--(v5)--(v6)--(v7)--(v8)--(v9)
    		(v1) to[bend left=60](v3)
    		(v3) to[bend left=60](v5)
    		(v5) to[bend left=60](v7)
    		(v7) to[bend left=60](v9)
    		;
    		\draw (5,-1) node{$B_{2,9}$};
		\end{tikzpicture}
	\end{center}
	\caption{Graphs $Q_{2,2}$ and $B_{2,9}$.}
	\label{fig:QB}
\end{figure}

\begin{theorem}\label{thm:main}
    For integers $a \geq 2$ and $1 \leq b \leq a$, we have $\vec{\pi}(Q_{a,b})=\frac{a+b-1}{a+b}$ and $\rho(Q_{a,b}) \leq \frac{a}{a+1}$. In addition, if $a$ is even, then $1/2 \leq \rho(Q_{a,b})$. 
\end{theorem}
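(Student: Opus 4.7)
The plan is to address the three claims separately.

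First, for the Tur\'an density $\vec{\pi}(Q_{a,b}) = (a+b-1)/(a+b)$, I will apply the Pach--Tardos formula~\eqref{eq:vpi}, which reduces the task to showing $\chi_<(Q_{a,b}) = a+b+1$. Since $Q_{a,b}$ contains the monotone path $P_{a+b+1}$ on all of its vertices, any interval coloring must put each pair of consecutive vertices in distinct classes, forcing every vertex to be its own class, so $\chi_<(Q_{a,b}) \geq a+b+1$; the reverse inequality is immediate from $|V(Q_{a,b})| = a+b+1$.

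Second, for the lower bound $\rho(Q_{a,b}) \geq 1/2$ when $a$ is even, I use a random $2$-coloring argument. Given any ordered graph $G$, pick $c\colon V(G)\to\{0,1\}$ uniformly at random, and let $G' \subseteq G$ retain precisely those edges whose endpoints receive different colors. Then $\mathbb{E}[e(G')] = e(G)/2$, and the critical point is that $G'$ is $Q_{a,b}$-free for \emph{every} choice of $c$: any embedded copy $w_1 < \cdots < w_{a+b+1}$ in $G'$ would force the colors $c(w_i)$ to alternate along the path, and since $a$ is even this gives $c(w_{a+1}) = c(w_1)$, contradicting that the chord $\{w_1, w_{a+1}\}$ must also be in $G'$. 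Averaging then yields a coloring with $e(G') \geq e(G)/2$.

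Third, for the upper bound $\rho(Q_{a,b}) \leq a/(a+1)$, I take $G = B_{a,n}$ with $n = am+1$ (so that $e(B_{a,n}) = (a+1)m$) and aim to prove that every $Q_{a,b}$-free $H \subseteq B_{a,n}$ satisfies $e(H) \leq n + O_a(1)$; letting $m \to \infty$ then gives the desired $\rho$-bound. The idea is to decompose $B_{a,n}$ into $m$ edge-disjoint \emph{blocks} $B_0, \ldots, B_{m-1}$, where $B_k$ is the $(a+1)$-cycle consisting of the path from $ka+1$ to $(k+1)a+1$ together with the long edge $e_k := \{ka+1, (k+1)a+1\}$, and call $B_k$ \emph{full} if all $a+1$ of its edges lie in $H$. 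Then two consecutive full blocks $B_k, B_{k+1}$ immediately yield a type-A copy of $Q_{a,b}$ (all of $B_k$ together with the first $b$ path edges of $B_{k+1}$), so no two consecutive blocks are full. A finer analysis using a type-B copy---one which uses $e_k$ as its chord and $e_{k+1}$ as a ``jumpy'' path edge to reach the next block---shows that whenever $B_k$ and $B_{k+2}$ are both full, the sandwiched block $B_{k+1}$ must be missing both its long edge $e_{k+1}$ and a first path edge, so $|H \cap B_{k+1}| \leq a-1$. Iterating these local constraints with a careful boundary accounting telescopes to $\sum_k |H \cap B_k| \leq ma + O_a(1) = n + O_a(1)$. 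I expect the main obstacle to be handling long chains of consecutive retained long edges, which produce a cascade of longer-range copies (types C, D, \ldots\ using three or more long edges) whose destruction must be charged precisely to edges missing from the intermediate blocks; I anticipate this requires an inductive argument on the cluster structure of retained long edges rather than a single counting step.
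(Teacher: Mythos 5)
Your arguments for the first two claims match the paper's. For $\vec{\pi}(Q_{a,b})=\frac{a+b-1}{a+b}$, both you and the paper compute $\chi_<(Q_{a,b})=a+b+1$ from the spanning monotone path and invoke Pach--Tardos. For the lower bound when $a$ is even, your random 2-coloring and the paper's ``largest bipartite subgraph'' are the same argument in two costumes: both produce a bipartite $G'$ with $e(G')\ge e(G)/2$, and both rest on the observation that $Q_{a,b}$ contains an odd cycle (the chord $\{1,a+1\}$ together with the path $1,\dots,a+1$, of length $a+1$).

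For the upper bound $\rho(Q_{a,b})\le\frac{a}{a+1}$, you correctly identify $B_{a,n}$ and the target $e(H)\le n+O_a(1)$, and your type-A and type-B local constraints are valid. But you then explicitly stop, noting that long runs of partially intact blocks produce ``a cascade of longer-range copies'' requiring an unproved inductive argument on clusters of retained long edges. This is a genuine gap, and your worry is well-founded: with only length-2 local constraints you cannot rule out, say, $C^{i_m}$ intact, then several blocks each missing exactly one edge, then $C^{i_{m+1}}$ intact --- a configuration with fewer deletions than you need, but which your constraints of bounded range do not see.

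The paper closes this gap with a single aggregation rather than a cascade. Let $E$ be the deleted edge set, $x_i=|E\cap E(C^i)|$, and let $i_1<\dots<i_k$ list the cycles with $x_i=0$. The paper proves
\[
  \sum_{i=i_m}^{i_{m+1}-1}x_i\ \ge\ i_{m+1}-i_m\qquad\text{for each }m,
\]
from which $|E|\ge\ell-1$ (writing $n=\ell a+1$) follows by summing over the gaps and the two boundary runs. The one extra observation that makes this a one-shot count is this: a cycle $C^i$ with \emph{exactly one} deleted edge still admits an ascending path from $C^i_1$ to $C^i_{a+1}$ avoiding $E$ (either the single long edge or the full path portion, one of which must survive). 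So if the displayed inequality failed, every intermediate cycle would have $x_i=1$, those traversals would concatenate into an ascending $E$-free path $P$ from $C^{i_m}_{a+1}$ running into the intact $C^{i_{m+1}}$, and $C^{i_m}\cup P$ would contain $Q_{a,b}$ --- here $a\ge b$ guarantees $P$ has at least $b$ edges. Your types A and B are the special cases $i_{m+1}-i_m\in\{1,2\}$, and the ``one deletion still leaves a monotone traversal'' lemma is exactly what dissolves the types C, D, $\dots$ you were anticipating. Without it, your plan as stated does not constitute a proof.
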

If $a$ is even and $b\geq 2$, Theorem~\ref{thm:main} implies that $Q_{a,b}$ satisfies \eqref{eq:question} as
	\[ \frac12 \cdot \vec{\pi}(Q_{a,b}) \,=\, \frac{a+b-1}{2(a+b)} \,<\, \frac{1}{2} \,\le\, \rho(Q_{a,b}) \,\le\, \frac{a}{a+1} \,<\, \frac{a+b-1}{a+b} \,=\, \vec{\pi}(Q_{a,b}). \]

Recall that for every ordered graph $F$,
	\[ \frac{\ell(F)-2}{2(\ell(F)-1)} \,\le\, \rho(F) \,\le\, \frac{\chi_<(F)-2}{\chi_<(F)-1} \,=\, \vec{\pi}(F). \]

Question \eqref{eq:question} in a sense replaces $\ell(F)$ by $\chi_<(F)$ in the lower bound. One may ask if each ordered graph $F$ satisfies $\vec{\pi}(F)/2 \leq \rho(F)$. The following proposition answers this question in the negative in a strong sense -- there are graphs with~$\vec{\pi}(F)$ arbitrarily close to~$1$ and relative Tur\'an density~$\rho(F)=0$. For $j \in \NN$, let $M_j$ be the ordered matching with vertices $[2j]$ and edges $\bigl\{\{2i-1,2i\} \colon i \in [j] \bigr\}$. 
\begin{prop}
    For every $j \geq 2$, $\chi_<(M_j)=j+1$ and $\rho(M_j)=0$.
\end{prop}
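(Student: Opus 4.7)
The plan is to handle the two assertions separately: first compute the interval chromatic number by a direct breakpoint-counting argument, and then construct a simple family of ordered graphs witnessing $\rho(M_j) = 0$.

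For the interval chromatic number, I would observe that any proper interval coloring of $M_j$ must place the two endpoints of each edge $\{2i-1, 2i\}$ in distinct intervals. Since these endpoints are consecutive in the ordering, this forces an interval boundary between positions $2i-1$ and $2i$ for every $i \in [j]$. These $j$ forced boundaries produce at least $j+1$ intervals, giving $\chi_<(M_j) \geq j+1$. For the matching upper bound, I would exhibit the explicit partition of $[2j]$ into intervals
\[
\{1\},\; \{2,3\},\; \{4,5\},\; \ldots,\; \{2j-2, 2j-1\},\; \{2j\},
\]
which has $j+1$ parts and contains no edge of $M_j$, since the edges of $M_j$ are exactly the pairs $\{2i-1, 2i\}$ and each such pair is split across two consecutive intervals.

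For the relative Tur\'an density, I would construct a simple witnessing family. Let $G_n$ denote the ordered graph on vertex set $[2n]$ whose edge set is the perfect matching $\{\{2i-1, 2i\} : i \in [n]\}$. The key structural observation is that for any choice of indices $i_1 < i_2 < \cdots < i_j$ in $[n]$, the induced ordered subgraph of $G_n$ on $\{2i_t - 1, 2i_t : t \in [j]\}$ is (after the unique order-preserving relabelling of its $2j$ vertices to $[2j]$) exactly~$M_j$. Consequently, every $j$-edge ordered subgraph of $G_n$ already contains a copy of $M_j$, so any $M_j$-free subgraph $G_n'$ of $G_n$ has at most $j-1$ edges. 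Therefore
\[
\frac{e(G_n')}{e(G_n)} \leq \frac{j-1}{n},
\]
which tends to $0$ as $n \to \infty$, forcing $\rho(M_j) = 0$.

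I do not expect any real obstacle: both halves reduce to elementary observations. The only slightly delicate point is the verification that any $j$ vertex-disjoint edges appearing in increasing order form a copy of $M_j$ as an \emph{ordered} subgraph, but this is immediate from the definition of $M_j$ once one passes to the order-preserving relabelling. The proposition is best viewed as a sanity check that the trivial lower bound~\eqref{eq:l} based on $\ell(F)$ genuinely can fall arbitrarily far below $\vec{\pi}(F)$, since here $\ell(M_j) = 2$ whereas $\vec{\pi}(M_j) = 1 - 1/j$ can be made as close to $1$ as one likes.
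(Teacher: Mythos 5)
Your proof is correct and follows essentially the same approach as the paper: your witness graph $G_n$ is precisely the larger matching $M_n$, and the key observation that any $j$ of its edges induce a copy of $M_j$ is identical to the paper's argument. The only difference is that you spell out the interval-chromatic-number computation and the order-preserving relabelling in slightly more detail, which the paper dispatches in one line.
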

\begin{proof}
	It is immediate to see that $\chi_<(M_j)=j+1$ since any interval vertex-coloring of~$M_j$ has~$\chi_<(2i-1) \neq \chi(2i)$ for each~$i \in [j]$. To show $\rho(M_j)=0$, consider the graphs $M_k$ with $k > j$. Since any $j$ edges in $M_k$ induce a copy of $M_j$, any~$G' \subseteq M_k$ which is~$M_j$-free has~$e(G') < j = \frac{j}{k} \cdot e(M_k)$. Letting $k \to \infty$ implies $\rho(M_j)=0$.
\end{proof}

The previous proposition can be extended to a more general observation. Suppose that~$F$ is an ordered graph with vertex set~$[n]$ and~$I \in \binom{[2n]}{n}$. Then denote by~$F+_I F$ the ordered graph on vertex set~$[2n]$ consisting of two vertex-disjoint copies of~$F$; one on~$I$ and one on~$[2n] \setminus I$, each maintaining their original ordering.

\begin{obs}\label{obs:copies}
    Let $F$ be an ordered graph on~$[n]$ and~$I \in \binom{[2n]}{n}$. Suppose either that
    \begin{enumerate}[itemsep=0pt, parsep=0pt, topsep=0pt, partopsep=0pt, left=10pt]
        \item  $|I \cap \{2i-1, 2i\}|=1$ for every~$i \in [n]$, or
        \item $I = [n]$.
    \end{enumerate}
    Then $\rho(F+_I F) = \rho(F)$.
\end{obs}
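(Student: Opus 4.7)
One direction, $\rho(F +_I F) \geq \rho(F)$, is essentially free: $F$ embeds as a subgraph of $F +_I F$, so every $F$-free subgraph of any host is also $(F +_I F)$-free, and the $\rho(F)$-density guarantee passes through automatically. The real content is the reverse inequality. My plan is, for any fixed $\epsilon > 0$, to exhibit a host ordered graph $G$ for which every $(F +_I F)$-free subgraph $G' \subseteq G$ satisfies $e(G')/e(G) \leq \rho(F) + \epsilon$. By the very definition of $\rho(F)$, I may fix a hard host $G_0$ on $[N]$ with the property that every $F$-free subgraph of $G_0$ has at most $(\rho(F) + \epsilon/2)\,e(G_0)$ edges; write $c_0$ for the number of copies of $F$ in $G_0$.

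For case~(2), where $I = [n]$, I would take $G$ to be $k$ disjoint copies of $G_0$ concatenated on consecutive intervals of $[kN]$. If two distinct blocks of $G'$ each contained a copy of $F$, placing those $2n$ vertices together realizes $F +_{[n]} F$ in $G'$ (earlier block on positions $1,\ldots,n$, later on $n+1,\ldots,2n$), contradicting $(F +_I F)$-freeness. Hence at most one block can carry an $F$-copy; the remaining $k-1$ are $F$-free, yielding $e(G') \leq \bigl[(k-1)(\rho(F)+\epsilon/2)+1\bigr]\,e(G_0)$, and dividing by $e(G)=k\,e(G_0)$ gives at most $\rho(F)+\epsilon$ for $k$ large.

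For case~(1), write $I = \{2i-1+\sigma(i) : i \in [n]\}$ for some $\sigma:[n]\to\{0,1\}$; concatenation fails here because both copies of $F$ must interleave in the ordering. I instead use the lexicographic blow-up $G := G_0[k]$, replacing each $v \in V(G_0)$ by an ordered independent set $S_v$ of size $k$ and each edge by the complete bipartite graph between the corresponding sets. For each ``section'' $f:[N]\to[k]$, define the slice $G'_f \subseteq G_0$ by $\{u,v\}\in E(G'_f)$ iff $\{(u,f(u)),(v,f(v))\}\in E(G')$; a standard double count yields $\mathbb{E}_f[e(G'_f)] = e(G')/k^2$. Splitting sections into ``good'' (with $G'_f$ $F$-free, contributing $\leq (\rho(F)+\epsilon/2)\,e(G_0)$) and ``bad'' (trivially $\leq e(G_0)$), the whole argument reduces to showing that the bad-section fraction drops below $\epsilon/2$ once $k$ is large enough.

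This fraction bound is the main obstacle. For any copy $C=\{a_1<\cdots<a_n\}$ of $F$ in $G_0$, let $T_C \subseteq [k]^n$ be the set of tuples $(f(a_1),\ldots,f(a_n))$ arising from bad sections witnessing $F\subseteq G'_f$ on $C$. If $t,t'\in T_C$ satisfied $t_i < t'_i$ whenever $\sigma(i)=0$ and $t_i > t'_i$ whenever $\sigma(i)=1$, then the two $F$-copies in $G'$ on $\{(a_i,t_i):i\}$ and $\{(a_i,t'_i):i\}$ would together form a copy of $F +_I F$: within each slot $a_i$ the two chosen vertices occupy sort-positions $2i-1, 2i$ in the combined $2n$-set, and the $\sigma$-strict order forces the first-copy vertex to land at position $2i-1+\sigma(i)\in I$, matching the structure of $F+_I F$ exactly. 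This contradicts $(F+_I F)$-freeness, so, after reflecting coordinates with $\sigma(i)=1$, $T_C$ is an antichain in $[k]^n$ under the strict pointwise order. Partitioning $[k]^n$ into the $k^n-(k-1)^n \leq nk^{n-1}$ diagonal chains $\{t, t+\vec 1, t+2\vec 1,\ldots\}$ bounds $|T_C|\leq nk^{n-1}$; summing over the $c_0$ copies of $F$ in $G_0$ gives a bad-section fraction of $O(c_0 n/k)$, which is $<\epsilon/2$ once $k$ is chosen sufficiently large in terms of $c_0$, $n$, and $\epsilon^{-1}$.
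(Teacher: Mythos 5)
Your argument is correct, and it settles case~(2) exactly as the paper does (concatenate disjoint copies of a hard host; at most one block of an $(F+_IF)$-free subgraph can contain a copy of $F$). Where you genuinely diverge is case~(1). The paper handles it with a two-sentence reduction: $F+_I F$ is an ordered subgraph of the blow-up $F^{(2)}$ of $F$ (one copy of $F$ sits on the transversal $\{(i,1+\sigma(i))\}_{i\in[n]}$ and the other on its complement), hence $\rho(F)\le\rho(F+_I F)\le\rho(F^{(2)})$, and the gap is closed by citing the blow-up invariance of $\rho$ established in \cite{Reiher2025}. You instead prove the nontrivial inequality $\rho(F+_I F)\le\rho(F)$ from scratch: blow up a hard host $G_0$ to $G_0[k]$, relate $e(G')$ to the average slice $e(G'_f)$ by a double count, and control the fraction of bad sections by noting that, for a fixed copy $C$ of $F$ in $G_0$ and after reflecting the $\sigma(i)=1$ coordinates, the witness tuples in $[k]^n$ form a strict antichain; a decomposition into the $k^n-(k-1)^n\le nk^{n-1}$ diagonal chains $\{t,t+\vec 1,\dots\}$ then yields a bad fraction of $O(c_0 n/k)$, which vanishes as $k\to\infty$. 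In effect you re-derive the piece of blow-up invariance actually needed. This costs more work but makes the observation self-contained and independent of the cited lemma, whereas the paper's route is shorter but opaque without \cite{Reiher2025} in hand.
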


In the first case, $F+_I F$ is a subgraph of the ordered blow-up $F^{(2)}$ of $F$, and the result follows from~\cite{RRSS25}, which shows that~$\rho(F)$ is invariant under blowups. In the second case, for any $\eps > 0$, let $G$ be a graph such that every subgraph of $G$ with more than $(\rho(F) + \eps)e(G)$ edges contains a copy of $F$. We place sufficiently many copies of $G$ in sequence, disjoint from one another. Then, in any $F$-free subgraph of the resulting graph, there can be at most one copy of $G$ whose subgraph has edge density exceeding $\rho(F) + \eps$. The above observation can be generalized to $k$ copies of $F$.

The class of ordered~$F$ with~$\vec{\pi}(F)=0$ is exactly those with~$\chi_<(F)=2$, and is a (strict, by~$M_j$ given above) subset of those~$F$ for which~$\rho(F)=0$. It is natural to ask for an analogous characterization of those~$F$ with~$\rho(F)=0$, and a natural first step is to decide if~$\rho(M)=0$ for every ordered matching~$M$. We confirm this for the following three-edge matchings.

\begin{theorem}\label{thm:16-23-45}
	Let $M = \bigl\{ \{1,6\}, \{2,3\}, \{4,5\} \bigr\}$. Then $\rho(M) = 0$. 
\end{theorem}

\begin{theorem} \label{thm:13-25-46}
    Let $M = \bigl\{ \{1,3\}, \{2,5\}, \{4,6\} \bigr\}$. Then $\rho(M) = 0$. 
\end{theorem}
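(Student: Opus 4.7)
The plan is to show $\rho(M)=0$ by exhibiting, for every $\varepsilon>0$, an ordered graph $G=G_\varepsilon$ such that every $M$-free subgraph $G'\subseteq G$ satisfies $e(G')<\varepsilon\cdot e(G)$. The structural feature of $M$ to exploit is its zigzag pattern: the middle edge $\{2,5\}$ crosses both $\{1,3\}$ on its left and $\{4,6\}$ on its right, while the outer pair $\{1,3\}$ and $\{4,6\}$ are separated. This suggests building a host graph in which every edge participates in many overlapping ``zigzag triples'' that are difficult to destroy simultaneously.

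The construction I would try is recursive. For the base, let $G^{(0)}$ on $[n]$ consist of all edges of length~$2$ and length~$3$. A direct count shows that each length-$3$ edge $\{a,a+3\}$ sits at the center of exactly four copies of $M$, formed by combining it with one of two choices of ``left'' edge of the form $\{v_1,a+1\}$ and one of two choices of ``right'' edge $\{a+2,v_6\}$. Solving the resulting fractional cover problem shows that every $M$-free subgraph of $G^{(0)}$ has density at most some $\alpha_0<1$. Then $G^{(k+1)}$ is obtained from $G^{(k)}$ by placing many vertex-disjoint scaled copies of $G^{(k)}$ in sequence along the vertex ordering and inserting between each pair of consecutive copies a specific set of linking edges chosen to create additional copies of $M$ that straddle the boundary.

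The inductive claim is that every $M$-free subgraph of $G^{(k)}$ has density at most $\alpha_0\cdot(1-\delta)^k$ for some absolute $\delta>0$ depending only on the substitution rule. The step combines two ingredients: (i) the restriction of any $M$-free subgraph $G'$ to each embedded sub-copy of $G^{(k)}$ is itself $M$-free, so by induction contributes at most the inductive bound; (ii) the linking edges are constrained by cross-boundary copies of $M$, so any dense set of linking edges forces further internal edges of the adjacent sub-copies to be removed. Balancing (i) and (ii) via double counting produces the multiplicative drop, and iterating sends the density to~$0$.

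The principal obstacle is designing the linking edges so that the inductive step genuinely yields a multiplicative improvement. Observation~\ref{obs:copies} warns that merely concatenating disjoint copies preserves the relative Tur\'an density, so the linking edges must actively participate in cross-boundary copies of $M$. The core combinatorial task is a case analysis of how the six ordered vertices of an $M$-copy can distribute between two adjacent sub-copies, and checking in each case that the linking edges together with a constant fraction of internal edges force a copy of $M$. This analysis is made somewhat delicate by the zigzag crossing pattern of $M$, and is likely parallel to (though distinct from) the proof of Theorem~\ref{thm:M} for the companion matching $\{\{1,6\},\{2,3\},\{4,5\}\}$, adapted from its nested structure to the zigzag structure of our~$M$.
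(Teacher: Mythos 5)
Your high-level instinct --- a recursive construction in which the linking edges must participate in cross-boundary copies of $M$ because the middle edge $\{2,5\}$ can straddle two blocks while $\{1,3\}$ and $\{4,6\}$ live in the left and right block respectively --- is exactly the right mechanism, and it is the same mechanism the paper exploits. However, the proposal stops short of the proof at precisely the points you yourself flag as ``the principal obstacle'' and ``the core combinatorial task,'' and those unfinished steps are where all the difficulty lies, so as written this is a plan rather than a proof.

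Several more specific gaps. First, your inductive claim posits a geometric decay $\alpha_0(1-\delta)^k$ with $\delta>0$ fixed, independent of $k$. The paper's construction $G_d$ only achieves a harmonic decay of order $2/d$ after $d$ levels (the linking layer contributes a vanishing fraction $1/(d+1)$ of all edges, so the per-level drop shrinks); there is no evidence that a fixed multiplicative improvement per level is attainable with a construction of this type, and you give no argument for it. Second, the step ``(ii) the linking edges are constrained by cross-boundary copies of $M$, so any dense set of linking edges forces further internal edges... to be removed'' is exactly what needs to be quantified, and your deterministic base graph (all edges of length $2$ and $3$) is not obviously the right object to make this tractable: the paper instead uses a quasi-random interval-bipartite matching $G(n,\eps)$ so that the number of linking edges between any two unions of intervals can be controlled, and tracks a potential function --- the total measure of ``covered'' points, i.e., points strictly between the two endpoints of some surviving edge --- through the recursion via a family of convex functions $f_d$. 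This potential is the device that actually makes the induction close, because $M$-freeness translates cleanly into ``no linking edge joins a point covered on the left to a point covered on the right.'' Nothing in your proposal plays this role, and without such a bookkeeping device the envisioned ``balancing (i) and (ii) via double counting'' does not obviously converge. Third, the ``fractional cover'' step for the base graph $G^{(0)}$ is asserted but not carried out, and it would at best give some $\alpha_0<1$; you would still need the recursion to drive the density to $0$, which is the genuinely hard part.

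In short: correct qualitative diagnosis of which part of $M$ must cross the boundary, but the base object, the recursion bookkeeping, and the inductive estimate are all missing, and the claimed rate of decay appears too strong. You would essentially need to rediscover the covered-point potential and the quasi-random matching building block (or an equivalent substitute) to make this rigorous.
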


Our final result requires some new notation. If~$F$ is an ordered graph on vertices~$\{1,\dots,n\}$, the local extension ~$\tilde{F}$ is the ordered graph on vertex set~$\{1,\dots,3n\}$ with edges between~$3i-1$ and~$3j-1$ whenever~$ij\in E(F)$ and between~$3k-2$ and~$3k$ for each~$k \in [n]$. Informally, we take a copy of~$F$ and add a short edge `over' each vertex. See Figure~\ref{fig:loc-ext} for an illustration of $\tilde{Q_{2,2}}$. 

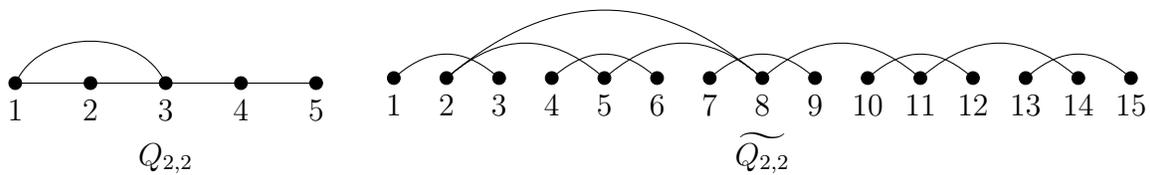
\begin{figure}[ht]
	\begin{center}
		\begin{tikzpicture}
			\draw 
			\foreach \i in {1,2,3,4,5}{
				(\i,0)node[vtx,label=below:$\i$](v\i){}
			}
			(v1)--(v2)--(v3)--(v4)--(v5)
			(v1) to[bend left=60](v3);
			\draw (3,-1) node{$Q_{2,2}$};
		\end{tikzpicture}
		\hskip 1em
		\begin{tikzpicture}[xscale = 0.7]
			\draw 
			\foreach \i in {1,...,15}{
				(\i,0)node[vtx,label=below:$\i$](v\i){}
			};
			\draw
			\foreach  \u/\v in {2/5, 5/8, 8/11, 11/14, 2/8}{
				(v\u) to[bend left] (v\v)
			};
			\draw
			\foreach  \u/\v in {1/3, 4/6, 7/9, 10/12, 13/15}{
				(v\u) to[bend left] (v\v)
			};
			\draw (8,-1) node{$\tilde{Q_{2,2}}$};
		\end{tikzpicture}
	\end{center}
	\caption{Ordered graphs~$Q_{2,2}$ and $\tilde{Q_{2,2}}$.}
	\label{fig:loc-ext}
\end{figure}

We prove that~$\rho$ is invariant under local extension.
\begin{theorem} \label{thm:loc-ext} 
	For every ordered graph $F$, we have $\rho(\tilde{F}) = \rho(F)$. 
\end{theorem}

Starting from a single edge~$\{1,2\}$ and iterating this procedure we see that $\rho(M) = 0$ for a family of matchings, which in particular includes the two examples of Theorems~\ref{thm:16-23-45} and~\ref{thm:13-25-46}.

In the next section, we present proofs of Theorems~\ref{thm:main} and \ref{thm:16-23-45}. In Section~\ref{sec:13-25-46} we prove Theorem~\ref{thm:13-25-46} and in Section~\ref{sec:loc-ext} we prove Theorem~\ref{thm:loc-ext}. Although Theorem~\ref{thm:loc-ext} implies Theorems~\ref{thm:16-23-45} and~\ref{thm:13-25-46}, we include short direct proofs because they utilise different methods. We conclude the paper with some unresolved questions.

\section{Proofs of Theorems~\ref{thm:main} and \ref{thm:16-23-45}}

\begin{proof}[Proof of Theorem~\ref{thm:main}]
	Since $P_{a+b+1}\subset Q_{a,b}$, we have $\chi_{<}(Q_{a,b}) = a+b+1$. Applying  \eqref{eq:vpi} yields $\vec{\pi}(Q_{a,b})=\frac{a+b-1}{a+b}$. Suppose further that $a$ is even, and let $G$ be an arbitrary ordered graph. Since $Q_{a+b}$ contains an odd cycle, any largest bipartite subgraph~$G'$ of~$G$, with~$e(G') \geq \frac{1}{2} e(G)$, avoids $Q_{a+b}$. Hence $1/2 \leq \rho(Q_{a+b})$, and it remains only to show that~$\rho(Q_{a,b}) \leq \frac{a}{a+1}$.

	Consider $B_{a,n}$ with~$n = \ell a+1$ for some sufficiently large integer $\ell$. Let $E\subseteq E(B_{a,n})$ be any set of edges such that $B_{a,n} - E$ is $Q_{a,b}$-free. The idea behind the proof is that on average, every cycle in~$B_{a,n}$ must contain close to one edge of~$E$; otherwise we would be able to extend an intact cycle by a monotone path and build a copy of~$Q_{a,b}$ in~$B_{a,n} - E$. Let $C^i$ be the cycle in $B_{a,n}$ starting at vertex $(i-1)a+1$ and ending at vertex~$ia+1$. Denote the vertices of $C^i$ by $\{C^i_1,\ldots,C^i_{a+1}\}$, let~$x_i = |E \cap E(C^i)|$, and let~$i_1 < i_2 < \dots < i_k$ enumerate those~$i \in [\ell]$ with~$x_i=0$ (allowing potentially for~$k=0$ and the list to be empty).

	We claim that 
		\begin{equation}\label{eqn:missing_edges} 
			\sum_{i=i_{m}}^{i_{m+1}-1}x_i \geq (i_{m+1}-i_{m})
		\end{equation} 
	for all~$m \in [k]$. Suppose, for the sake of contradiction, that~\eqref{eqn:missing_edges} fails for some~$m \in [k]$. We will show this forces the existence of a copy of~$Q$. Since the~$x_i$ are nonnegative integers it follows that~$x_i = 1$ for each~$i_{m} < i < i_{m+1}$; that is, from each such~$C^i$ there is exactly one edge of $E$ in $C^{i}$. Since each~$C^{i}$ is a cycle, there exists an ascending path from $C^{i}_{1}$ to $C^{i}_{a+1}$ avoiding $E$ (namely either the single edge~$\{C^{i}_{1},C^{i}_{a+1} \}$ or the path~$\{C^{i}_{1},C^{i}_{2},\dots,C^{i}_{a+1}\}$). Hence, by concatenating these paths, there is an ascending path $P$ that avoids the edges of $E$ starting at $C^{i_{m}}_{a+1}$ and ending at $C^{i_{m+1}}_1$. Since $C^{i_{m+1}}$ has no edges in $E$, we can extend $P$ by $C^{i_{m+1}}_1, C^{i_{m+1}}_2,\ldots,C^{i_{m+1}}_{a+1}$ while still avoiding $E$. As $a \geq b$, there is a copy of $Q_{a,b}$ in $C^{i_m} \cup P$, and therefore by contradiction~$\eqref{eqn:missing_edges}$ holds for each~$m \in [k]$. Therefore,
	\begin{align*}
		|E| = \sum_{i=1}^\ell x_i&=\sum_{j<i_1}x_j+ \sum_{j \ge i_k}x_j +\sum_{m=1}^{k-1}\sum_{j=i_m}^{i_{m+1}-1}x_j\\
			&\geq (i_1-1)+(\ell-i_k)+\sum_{m=1}^{k-1}(i_{m+1}-i_m)\\
			&= \ell -1\,.
	\end{align*}

	Since $|E(B_{a,n})| = (a+1)\ell$,
		\[ \rho(Q_{a,b}) \leq \frac{(a+1)\ell - (\ell-1)}{(a+1)\ell} = \frac{a}{a+1} + \frac{1}{(a+1)\ell}. \]
	The result follows by taking $\ell$ arbitrarily large.
\end{proof}

\begin{proof}[Proof of Theorem~\ref{thm:16-23-45}]
	For $d \in \NN$, we define the graph $H_d$ recursively as follows:
	\begin{itemize}[itemsep=0pt, parsep=0pt, topsep=0pt, partopsep=0pt, left=0pt]
    	\item For $d = 1$, let $H_1 \coloneq \bigl\{ \{1, 2\} \bigr\}$ be the single ordered edge.
    	\item For $d \ge 2$, let $H_{d-1}^1$ and $H_{d-1}^2$ be two disjoint copies of $H_{d-1}$, arranged so that all vertices of $H_{d-1}^1$ appear before all vertices of $H_{d-1}^2$ in the ordering. Introduce $2^{d-1}$ new vertices $\{a_1, \cdots, a_{2^{d-1}}\}$ placed before $H_{d-1}^1$, and another $2^{d-1}$ vertices $\{b_1, \cdots, b_{2^{d-1}}\}$ placed after $H_{d-1}^2$. Let 
        	\[ M_d \coloneq \bigl\{ \{a_1, b_{2^{d-1}}\}, \{a_2, b_{2^{d-1}-1}\}, \cdots, \{a_{2^{d-1}}, b_1\} \bigr\}, \]
    	and set $H_d \coloneq H_{d-1}^1 \cup H_{d-1}^2 \cup M_d$. 
	\end{itemize}

	Graphs $M = \bigl\{ \{1,6\}, \{2,3\}, \{4,5\} \bigr\}$ and $H_3$ are shown in Figure~\ref{fig:M}.

	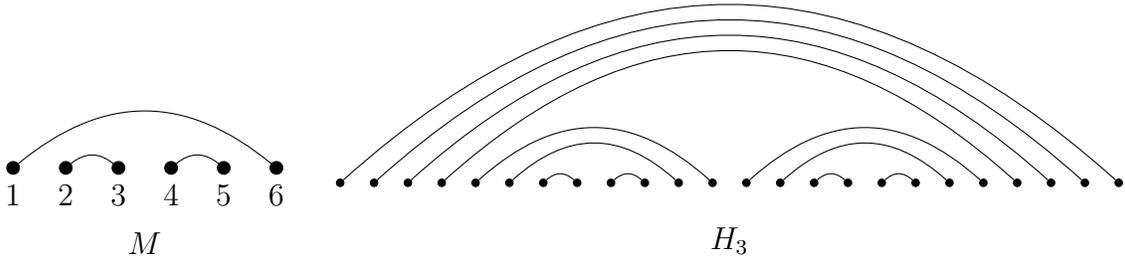
\begin{figure}[ht]
		\begin{center}
			\begin{tikzpicture}[xscale = 0.7]
    			\draw
    			\foreach \i in {1,...,6}{
    				(\i,0)node[vtx,label=below:$\i$](v\i){}
    			}
    			;

				\draw
				\foreach  \u/\v in {1/6,2/3,4/5}{
					(v\u) to[bend left] (v\v)
				}
				;
				\draw(3.5,-1)node{$M$};
			\end{tikzpicture}
			\hskip 1em
			\begin{tikzpicture}[xscale = 0.45, yscale = 0.7]
    			\draw
    			\foreach \i in {1,...,24}{
    				(\i,0)node[circle, draw, fill=black, inner sep=1pt](v\i){}
    			}
    			;
    
				\draw
				\foreach  \u/\v in {1/24,2/23,3/22,4/21,5/12,6/11,7/8,9/10,13/20,14/19,15/16,17/18}{
					(v\u) to[bend left] (v\v)
				}
				;
				\draw (12.5,-1.1)node{$H_3$};
			\end{tikzpicture}
		\end{center}
		\caption{Graphs $M$ and $H_3$.}\label{fig:M}
	\end{figure}

	It is easy to see that $|E(H_d)| = d \cdot 2^{d-1}$. We show by induction on~$d$ that any $M$-free subgraph~$F\subseteq H_d$ has~$e(F) \leq \frac{2}{d}e(H_d)$. The base cases $d = 1$ and $d = 2$ are clear. 

	Suppose now that the claim holds for $d-1$, and let $F \subseteq H_d$ be an $M$-free subgraph. If no edge of $M_d$ is included in $F$, then by the inductive hypothesis, the number of edges in $F$ is at most 
		\[ \frac{2}{d-1} \cdot |E(H_{d-1}^1)| + \frac{2}{d-1} \cdot |E(H_{d-1}^2)| = 2^d = \frac{2}{d} \cdot |E(H_d)|. \] 
  
	If $F$ contains at least one edge from $M_d$, then by the structure of the forbidden configuration, we cannot simultaneously select edges from both $H_{d-1}^1$ and $H_{d-1}^2$ without forming a copy of $M$. Applying the inductive hypothesis to that part (WLOG,~$H_{d-1}^1$) shows that the number of edges in~$F$ is at most 
		\[ |M_d|+\frac{2}{d-1}\cdot |E(H_1)|  = 2^d = \frac{2}{d} \cdot |E(H_d)|. \]
	Letting~$d \to \infty$ completes the proof that~$\rho(M)=0$.
\end{proof}

\section{Proof of Theorem~\ref{thm:13-25-46}} \label{sec:13-25-46}
We need the following construction of a quasi-random ordered matching, which is a slight modification of a construction due to Arman, R\"{o}dl, and Sales~\cite{ARS22}. For certain notational purposes it will be more convenient to construct a graph on the vertex set~$[0,2]\subset\RR$. Unless otherwise specified, an interval may be open, closed, or half-open. 

\begin{lemma} \label{lem:quasi-random}
    For any $\eps > 0$, there exists an integer $N = N(\eps)$ such that for all $n > N$, there exists an interval-bipartite matching, denoted $G(n, \eps)$, with $n$ edges between the two parts $(0,1)$ and $(1,2)$ satisfying
        \[ \forall\ \text{intervals } I \subseteq (0,1),\ J \subseteq (1,2), \quad \Bigl|e(I,J) - |I| \cdot |J| \cdot n \Bigr| \le \eps n, \]
    where $|I|$ denotes the length of $I$, and $e(I, J)$ denotes the number of edges between points in $I$ and $J$.
\end{lemma}

\begin{proof}
    Let $t = \lceil 50 / \eps \rceil$, and partition the intervals $(0,1)$ and $(1,2)$ into $t$ equal-length subintervals. For any interval $I \subseteq (0,1)$ (and similarly for $J \subseteq (1,2)$), let $I_1$ and $I_2$ denote the largest and smallest unions of these discretized subintervals such that $I_1 \subseteq I \subseteq I_2$. Then, it is straightforward to verify that
        \[ 0 \le e(I, J) - e(I_1, J_1),\ e(I_2, J_2) - e(I, J) \le e(I_2 \setminus I_1, J_1) + e(I_1, J_2 \setminus J_1) + e(I_2 \setminus I_1, J_2 \setminus J_1)\,, \]
    and therefore (by applying routine set-theoretic calculus) it suffices to prove that
        \[ \Bigl| e(I, J) - |I| \cdot |J| \cdot n \Bigr| \le \frac{\eps n}{10} \]
    for all $I$ and $J$ that are unions of consecutive discretized intervals.

    We construct the edge set $E(G) \subseteq (0,1) \times (1,2)$ by selecting $n$ edges independently, where each edge connects a pair of endpoints chosen uniformly at random from $(0,1)$ and $(1,2)$, respectively. By standard concentration inequalities (e.g., Hoeffding’s inequality), for any fixed pair of intervals $I \subseteq (0,1)$ and $J \subseteq (1,2)$, we have
        \[ \mathbb{P} \biggl( \Bigl| e(I, J) - |I| \cdot |J| \cdot n \Bigr| \ge \frac{\eps n}{10} \biggr) \le \exp\left( - \frac{\eps^2 n}{50} \right). \]

    Since there are at most $t^4$ such interval pairs $(I, J)$ formed by unions of consecutive subintervals, a union bound shows that with high probability, the bound holds simultaneously for all such pairs. Hence, for sufficiently large $n$, there exists a graph $G$ satisfying the desired property. 
\end{proof}
We will need the following, which extends the quasirandom property obtained above to finite unions of intervals.
\begin{prop} \label{prop:quasi-uniform}
    Suppose that $I \subseteq (0,1)$ and $J \subseteq (1,2)$ are disjoint unions of $a$ and $b$ intervals, respectively. Then in $G(n, \eps)$, we have
        \[ \Bigl| e(I, J) - |I| \cdot |J| \cdot n \Bigr| \le ab \eps n. \]
\end{prop}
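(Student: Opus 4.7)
The plan is to reduce the claim to the single-interval case already established in the preceding proposition, which applies to any interval $I \subseteq (0,1)$ and any interval $J \subseteq (1,2)$. Since $I$ and $J$ here are disjoint unions of intervals, both the edge count $e(I,J)$ and the product $|I|\cdot|J|$ decompose additively into contributions from pairs of constituent intervals, and the error bound follows by the triangle inequality.

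Concretely, I would write $I = \bigsqcup_{i=1}^{a} I_i$ and $J = \bigsqcup_{j=1}^{b} J_j$, where each $I_i \subseteq (0,1)$ and each $J_j \subseteq (1,2)$ is a single interval. Because the $I_i$ are pairwise disjoint and so are the $J_j$, every edge of $G(n,\eps)$ with one endpoint in $I$ and the other in $J$ lies in exactly one pair $I_i \times J_j$, so
\[
e(I,J) = \sum_{i=1}^{a}\sum_{j=1}^{b} e(I_i, J_j).
\]
Similarly, by additivity of Lebesgue measure,
\[
|I|\cdot|J| = \Bigl(\sum_{i=1}^a |I_i|\Bigr)\Bigl(\sum_{j=1}^b |J_j|\Bigr) = \sum_{i=1}^{a}\sum_{j=1}^{b} |I_i|\cdot |J_j|.
\]

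Subtracting these two identities and applying the triangle inequality, then invoking the preceding proposition to each single-interval pair $(I_i, J_j)$, gives
\[
\bigl|e(I,J) - |I|\cdot|J|\cdot n\bigr| \le \sum_{i=1}^{a}\sum_{j=1}^{b} \bigl|e(I_i, J_j) - |I_i|\cdot|J_j|\cdot n\bigr| \le ab\,\eps n,
\]
which is the desired bound. There is no real obstacle here: the argument is a straightforward bookkeeping step that amplifies the single-interval discrepancy bound into a union-of-intervals bound, with the only mild point being to verify the disjointness of the decompositions so that the counts and measures add cleanly.
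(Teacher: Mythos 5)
Your argument is correct, and it is exactly the intended one: the paper states this proposition without proof, evidently because the decomposition into single-interval pairs plus the triangle inequality is the obvious route. Writing $I=\bigsqcup_{i=1}^a I_i$, $J=\bigsqcup_{j=1}^b J_j$, using additivity of edge counts and of Lebesgue measure over the disjoint pieces, and then applying the single-pair bound $\lvert e(I_i,J_j)-\lvert I_i\rvert\cdot\lvert J_j\rvert\cdot n\rvert\le\eps n$ to each of the $ab$ terms gives precisely $ab\eps n$. Nothing further is needed.
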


We construct the graph witnessing $\rho(M) = 0$ in Theorem~\ref{thm:13-25-46} as follows. Take~$\epsilon>0$ small and $n>N(\epsilon)$ sufficiently large, and let $G_1 = G(n, \eps)$. We retain only the points in $(0,2)$ that appear as endpoints of edges in $G_1$, preserving their original order in $(0,2)$. 

For each $d \ge 1$, define
    \[ G_{d+1} = \tfrac{1}{2} \cdot G_d \cup \left(1 + \tfrac{1}{2} \cdot G_d \right) \cup G(2^d n, \eps), \]
where $\tfrac{1}{2} \cdot G_d$ denotes the rescaling of $G_d$ by a factor of $\tfrac{1}{2}$, and $1 + \tfrac{1}{2} \cdot G_d$ denotes its rightward translation by $1$. It follows that $e(G_d) = d \cdot 2^{d-1} n$.

Now we give the key definition and heuristic for this construction. Given a subgraph $H \subseteq G_d$, we say that a point $x \in (0,2)$ is \emph{covered} if there is an edge~$\{a,b\} \in E(H)$ with $a < x < b$. Intuitively, most points will be covered, and most edges between two covered points must be removed. The remainder of the proof is dedicated to formalizing this notion. 

\begin{lemma} \label{lem:intvl-count}
    For any subgraph $H \subseteq G_d$, the set of covered points is a union of at most $2^d - 1$ disjoint open intervals.
\end{lemma}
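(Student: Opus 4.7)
The plan is to prove this by induction on $d$, closely tracking the recursive structure $G_d = \tfrac{1}{2} G_{d-1} \cup (1 + \tfrac{1}{2} G_{d-1}) \cup G(2^{d-1} n, \eps)$. Before starting the induction I would note that the covered set is automatically open (if $a < x < b$ for some $\{a,b\} \in E(H)$, the same strict inequalities hold for all $x'$ near $x$), so the content of the lemma is really a bound of $2^d - 1$ on the number of connected components of the covered set.

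For the base case $d = 1$, every edge of $G_1 = G(n, \eps)$ joins a point of $(0,1)$ to a point of $(1,2)$ and therefore spans $x=1$. Hence for any $H \subseteq G_1$ the covered set $\bigcup_{\{a,b\} \in E(H)} (a,b)$ is either empty or a single open interval containing $1$, matching the bound $2^1 - 1 = 1$.

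For the inductive step I would partition the edges of $H \subseteq G_d$ into three classes: $H_L \subseteq \tfrac{1}{2} G_{d-1}$, $H_R \subseteq 1 + \tfrac{1}{2} G_{d-1}$, and the ``bridge'' part $H_M \subseteq G(2^{d-1} n, \eps)$, with corresponding covered sets $C_L$, $C_R$, $C_M$. Every edge of $H_L$ lies inside $(0,1)$, so $C_L \subseteq (0,1)$; moreover, since $x \mapsto \tfrac{1}{2} x$ is an affine bijection from $(0,2)$ onto $(0,1)$ that carries open intervals to open intervals, $C_L$ is the image under this map of the covered set of a subgraph of $G_{d-1}$, which by the inductive hypothesis is a union of at most $2^{d-1} - 1$ open intervals. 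Symmetrically $C_R \subseteq (1,2)$ is a union of at most $2^{d-1} - 1$ open intervals. Finally, every edge of $G(2^{d-1} n, \eps)$ joins $(0,1)$ to $(1,2)$ and hence spans $1$, so the base-case argument shows $C_M$ is either empty or a single open interval containing $1$.

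The covered set of $H$ is precisely $C_L \cup C_R \cup C_M$, and a general fact about open subsets of $\mathbb{R}$ is that a union cannot have more connected components than the total number of components of the pieces: each component of the union is connected, open, and contains at least one full component of some piece, and distinct components of the union account for distinct components of the pieces. This gives the bound $(2^{d-1}-1) + (2^{d-1}-1) + 1 = 2^d - 1$, completing the induction. I do not anticipate any real obstacle; the only things to be careful about are that the ``covered set'' operation commutes with the edge decomposition $H = H_L \sqcup H_R \sqcup H_M$ and with the affine rescalings in the recursive construction, both of which are routine verifications.
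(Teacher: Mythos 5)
Your proof is correct and takes essentially the same approach as the paper: induct on $d$, decompose $H$ along the recursive structure $G_d = \tfrac{1}{2}G_{d-1} \cup (1+\tfrac{1}{2}G_{d-1}) \cup G(2^{d-1}n,\eps)$, apply the inductive hypothesis to the two rescaled copies, observe that the bridge edges all span the point $1$ so contribute at most one extra interval, and sum to $2^d-1$. You spell out a couple of details the paper leaves implicit (openness of the covered set, the component-count bound for unions of open sets, and that rescaling preserves interval structure), but the decomposition and counting are identical.
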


\begin{proof}
    We proceed by induction on $d$. By construction,
        \[ G_{d+1} = \tfrac{1}{2} \cdot G_d \cup \left( 1 + \tfrac{1}{2} \cdot G_d \right) \cup G(2^d n, \eps). \]
    By the inductive hypothesis, the sets of points covered by any subgraph of either of the first two components can be written as $2^d - 1$ disjoint open intervals. Finally, if~$y$ denotes the leftmost left endpoint of an edge in~$H \cap G(2^dn,\epsilon)$, and~$z$ denotes the rightmost right endpoint of any edge, then the third component covers a point~$x$ if and only if~$y<x<z$, contributing at most one additional interval.
    Therefore, the set of covered points may be written as the union of at most
        \[ (2^d - 1) + (2^d - 1) + 1 = 2^{d+1} - 1, \]
     open intervals, completing the induction.
\end{proof}

\begin{proof}[Proof of Theorem~\ref{thm:13-25-46}]
	For~$d \in \NN$ and~$t \in [0,1]$, define
    	\[ f_d(t) = 
        	\begin{cases}
            	2^{d-1} t^2 & \text{if } t \in \left[0, \frac{1}{2^{d-1}}\right], \\
            	2t - \frac{1}{2^{d-1}} & \text{if } t \in \left(\frac{1}{2^{d-1}}, 1\right],
        	\end{cases} \]
	and observe that, for fixed~$d$,~$f_d(t)$ is convex and increasing in~$t$. It may be helpful for intuition to also note that the~$f_d$ converge in~$d$ to~$f(t) = 2t$. For a subgraph $H \subseteq G_d$, let $C(H)$ denote the set of points covered by $H$, and let $|C(H)|$ denote its total length, i.e., the sum of the lengths of all intervals that make up $C(H)$. Furthermore let~$L(H) =  \{x \colon \{x,y\} \in E(H)\}$ denote the set of left endpoints edges in~$H$,~$R(H) = \{y \colon \{x,y\} \in E(H)\}$ denote the set of right endpoints, and for a set~$X \subseteq (0,2)$, let~$\conv(X)$ denote the convex hull. Finally, for~$A \subset (0,2)$, let~$H_A = \{\{x,y \} \colon x,y \in A \}$ be the induced subgraph taken on~$A$. We aim to prove the following: 

	\textbf{Claim.} Let $d \in \NN$, $t \in \RR_{\ge 0}$, and suppose $H \subseteq G_d$ is an $M$-free subgraph with $|C(H)| \le 2t$. Then
    	\[ e(H) \le f_d(t) \cdot 2^{d-1} n + { 10^d} \eps n. \]

	\begin{proof}[Proof of the Claim]
		We proceed by induction on $d$. 
    
    	\textit{Base case ($d = 1$):} Let $H \subseteq G_1 = G(n, \eps)$, set~$I = S(H_{[0,1)})$, and set $J = S(H_{(1, 2]})$. By the quasi-randomness of $G(n,\eps)$,
        	\[ e(H) \le |E(I, J)| \le \left(|I| \cdot |J| + \eps\right)n. \]
    	Since~$(I \cup J) \subseteq C(H)$ and~$I \cap J = \emptyset$, we have $|I| + |J| \le |C(H)| \le 2t$ so that $|I| \cdot |J| \leq t^2$. Thus,
    		\[ e(H) \le \left(t^2 + \eps\right) n \le (f_1(t) + \eps) n. \]

    	\textit{Induction step:} Suppose the claim holds for some $d \ge 1$, and consider an $M$-free subgraph $H \subseteq G_{d+1}$. Decompose the edges of $H$ into three parts:
        \begin{itemize}[itemsep=0pt, parsep=0pt, topsep=0pt, partopsep=0pt]
            \item $A=H_{[0,1)}$: edges from the left copy $\tfrac{1}{2} \cdot G_d$,
            \item $B=H_{(1,2]}$: edges from the right copy $1 + \tfrac{1}{2} \cdot G_d$,
            \item $C$: the remaining edges, i.e. those from $G(2^d n, \eps)$. 
        \end{itemize}

    	Let $I = C(A) \subseteq (0, 1)$ and $J = C(B) \subseteq (1, 2)$ denote the sets of covered points by $A$ and $B$, respectively. Then $I, J$ are each unions of at most $2^d - 1$ intervals from Lemma~\ref{lem:intvl-count}, and their complements may each be expressed as the union of at most $2^d$ intervals, say~$(0,1) \setminus I = \bigdcup_i P_i$ and~$(1,2) \setminus J = \bigdcup_j Q_j$. Let~$I' \coloneq \bigdcup_i \conv(P_i \cap L(C))$ and~$I' \coloneq \bigdcup_j \conv(Q_j \cap R(C))$, so that each is the union of at most $2^d$ intervals. 

		Since $H$ is $M$-free, there are no edges in $C$ connecting $I$ and $J$. Thus,
        	\[ C \subseteq (I' \times J') \cup (I' \times J) \cup (I \times J'). \]

    	Clearly, $I \cup J \cup \mathring{I'} \cup \mathring{J'} \subseteq C(H)$, where $\mathring{I'}$ denotes the interior of $I'$. Let $a = |I|$, $b = |J|$, $x = |I'|$, and $y = |J'|$. Then
        	\[ a+b+x+y \le |C(H)| \le 2t, \quad a+x \le 1, \quad b+y \le 1. \]
    	By the inductive hypothesis and Proposition~\ref{prop:quasi-uniform}, we obtain
    	\begin{align*}
        	e(H) &= |A| + |B| + |C| \\
            	&\le \left( f_d(a) \cdot 2^{d-1} + { 10^d}\eps \right) n + \left( f_d(b) \cdot 2^{d-1} + { 10^d}\eps \right) n + (xy+bx+ay + 3 \cdot 2^d \cdot 2^d \eps) e(G(2^dn, \eps)) \\
            	&\le \left( \frac{f_d(a) + f_d(b)}{2} + xy + bx + ay \right) \cdot 2^d n + (10^d + 10^d + 3 \cdot 2^{3d}) \eps n \\
            	&\le \left( \frac{f_d(a) + f_d(b)}{2} + (a + x)(b + y) - ab \right) \cdot 2^d n + { 10^{d+1}} \eps n. 
    	\end{align*}

    	Denote $\frac{f_d(a) + f_d(b)}{2} + (a + x)(b + y) - ab$ by $(\ast)$. It suffices to show that $(\ast)\le f_{d+1}(t)$.

    	We now consider two cases based on the value of $t$:

    	\begin{itemize}[itemsep=0pt, parsep=0pt, topsep=0pt, partopsep=0pt, left=0pt]
       	 	\item $t \le \frac{1}{2^d}$. 
            	In this case, $a, b \le \frac{1}{2^{d-1}}$, so by definition,
                	\[ f_d(a) \,\le\, 2^{d-1} a^2, \quad f_d(b) \,\le\, 2^{d-1} b^2. \]
            	Hence,
                	\[ (\ast) \,\le\, \tfrac{1}{2} \cdot 2^{d-1} (a^2 + b^2) + (a + x)(b + y) - ab \,\le\, 2^{d-2} (a + b + x + y)^2 \,\le\, 2^d t^2 = f_{d+1}(t). \]
                
        	\item $\frac{1}{2^d} < t \le 1$.  
            	Since $f_d$ is convex, we may apply the following rebalancing:
                	\[ (a, b, x, y) \to (a + \Delta, b - \Delta, x - \Delta, y + \Delta), \]
            	without decreasing the value of $(\ast)$. Eventually, either $b = 0$ or $x = 0$.
            	\begin{itemize}[itemsep=0pt, parsep=0pt, topsep=0pt, partopsep=0pt, left=0pt]
					\item If $b = 0$, then by the monotonicity of $f_d$, we may assume without loss of generality that $x = 0$, after replacing $(x,a)$ by $(0, a+x)$. Then
							\[ (\ast) \,\le\, \frac{1}{2} \cdot f_d(a) + ay. \]
                    	If $a \ge \frac{1}{2^{d-1}}$, then using the linear part of $f_d$:
                        	\[ (\ast) \,\le\, \frac{1}{2} \left( 2a - \frac{1}{2^{d-1}} \right) + a(2t - a) \,=\, 2t - (2t-a)(1-a) - \frac{1}{2^d} \,\le\, 2t - \frac{1}{2^d} \,=\, f_{d+1}(t), \]
                    	where the last inequality follows from $a \le \min\{2t, 1\}$. 

                    	If $a < \frac{1}{2^{d-1}}$, then
                        	\[ (\ast) \,\le\, 2^{d-2} a^2 + a(2t - a) \,=\, (2^{d-2} - 1)a^2 + 2ta, \]
                    	which is monotone increasing in $a$, so we may apply the previous argument for when~$a=\frac{1}{2^{d-1}}$. 
                    
                \item If $x = 0$, then
                        \[ (\ast) \,\le\, \frac{1}{2} \left( f_d(a) + f_d(b) \right) + a(b + y) - ab \,=\, \frac{1}{2} \left( f_d(a) + f_d(b) \right) + a y. \]
                    Again, by the convexity of $f_d$, one can reduce to the previous case where $b = 0$. 
            \end{itemize}
    \end{itemize}

    	This completes the induction. 
	\end{proof}

	By the claim, for any $M$-free subgraph $H \subseteq G_d$, we have
    	\[ e(H) \,\le\, f_d(1) \cdot 2^{d-1} n + 10^d \eps n \,<\, \left( \frac{2}{d} + \frac{10^d}{d2^{d-1}}\eps \right) \cdot e(G_d).\]
	Letting $d \to \infty$ and $\eps \to 0$ (in that order, and always taking~$n>N(\epsilon)$ large enough), we conclude that $\rho(M) = 0$.
\end{proof}

\section{A Local Extension Argument} \label{sec:loc-ext}
In this section we prove Theorem~\ref{thm:loc-ext}. The following proposition plays the same role as Lemma~\ref{lem:quasi-random} in the proof of Theorem~\ref{thm:13-25-46}. 
\begin{lemma} \label{lem:quasi-random-2}
	Let $\Gamma$ be an ordered graph and let $\eps > 0$. For every sufficiently large integer $n$ divisible by $e(\Gamma)$, there exists a graph $G(n, \Gamma, \eps)$ on the vertex set $(0, v(\Gamma))$ with $n$ edges such that:
	\begin{itemize}[itemsep=0pt, parsep=0pt, topsep=0pt, partopsep=0pt, left=10pt]
		\item $G(n, \Gamma, \eps)$ is the disjoint union of $\tfrac{n}{e(\Gamma)}$ copies of $\Gamma$, and  
		\item for every interval $I \subseteq (0, v(\Gamma))$, the number of edges incident to $I$ is at most $(|I| + \eps)n$.
	\end{itemize}
\end{lemma}

\begin{proof}
	Partition the interval $(0, v(\Gamma))$ into subintervals
		\[ V_1 = (0,1], \quad V_2 = (1,2], \quad \cdots, \quad V_{v(\Gamma)} = (v(\Gamma)-1, v(\Gamma)). \]
	Let $V(\Gamma) = \{a_1, \dots, a_{v(\Gamma)}\}$ be the vertex set of $\Gamma$ ordered from left to right. Independently sample $\tfrac{n}{e(\Gamma)}$ copies of $\Gamma$ by placing each $a_i$ uniformly at random in the interval $V_i$ for every copy. The rest of the proof follows from the same argument as in Lemma~\ref{lem:quasi-random}. 
\end{proof}
Now we are ready to prove Theorem~\ref{thm:loc-ext}. 

\begin{proof}
	Clearly $\rho(\tilde{F}) \ge \rho(F)$ since~$F$ is an ordered subgraph of~$\tilde{F}$. To show the reverse inequality, fix $\delta > 0$ and suppose $\Gamma$ is a witness to the value of~$\rho(F)$ so that any~$\Gamma' \subseteq \Gamma$ which is $F$-free has $e(\Gamma') \leq \left(\rho(F) + \delta\right)e(\Gamma)$. Our goal is to construct a graph witnessing that $\rho(\tilde{F}) < \rho(F) + 2\delta$. 
	
	Let $m = v(\Gamma)$. Define 
		\[ G_1 \coloneq G(n, \Gamma, \eps), \]
	and recursively set
		\[ G_{d+1} \coloneq \biggl( \bigcup_{i = 0}^{m-1} \left(i + \tfrac{1}{m} \cdot G_d \right) \biggr) \cup G(m^dn, \Gamma, \eps). \]
    Then $G_d$ has $d m^{d-1} n$ edges and can be viewed as the union of $d$ layers, where in the $i$-th layer, there are $m^{d-i}$ translations of $\tfrac{1}{m^{d-i}} \cdot G(m^{i-1} \cdot n, \Gamma, \eps)$. We denote them by $G_d^{i,1}, \cdots, G_d^{i,m^{d-i}}$ and write $G_d^i = \bigcup_{j = 1}^{m^{d-i}} G_d^{i,j}$ for the $i$-th layer. 
	
	Let $H \subseteq G_d$ be an $\tilde{F}$-free subgraph, and let $H_{\le i} \subseteq H$ denote the subgraph consisting of all edges contained in the first $i$ layers (counted from the bottom). For an integer $i \ge 0$, define $\Cov(i) \subseteq (0,m)$ to be the set of points covered by edges of $H_{\le i}$, that is, $\Cov(i)$ consists of all points $x \in (0,m)$ for which there exists an edge $e \in E(H_{\le i})$ such that $x$ lies between the two endpoints of $e$. By the construction of $G_d$, the set $\Cov(i)$ is the union of at most $i \cdot m^d$ intervals. Let $\cov(i)$ denote the total length of $\Cov(i)$. For convenience, we set $\Cov(i) = \varnothing$ whenever $i \le 0$. 
	
	Clearly,  
		\[ 0 = \cov(0) \le \cov(1) \le \cdots \le \cov(d) \le m. \]
        
	Decompose the edge set of $H$ as $E(H) = A \cup B$, where
	\begin{itemize}[itemsep=0pt, parsep=0pt, topsep=0pt, partopsep=0pt]
		\item $A$ consists of edges whose endpoints both lie in $\Cov(i-1)$ for some $i$, and
		\item $B = E(H) \setminus A$ consists of the remaining edges. 
	\end{itemize}

    For each $i$, let $B_i \coloneq B \cap E(G_d^i)$ be the set of edges in $B$ that lie in the $i$-th layer. Then every edge in $B_i$ has at least one endpoint outside $\Cov(i-1)$. Moreover, the closure of $\Cov(i)$ contains the endpoints of all edges in $B_i$. Hence, by the second property of Lemma~\ref{lem:quasi-random-2}, we obtain
    \begin{align*}
        |B| \,\le\, \sum_{i=1}^d |B_i| &\le\, \sum_{i=1}^d \bigl(\cov(i) - \cov(i-1) + 3i \cdot m^d \eps \bigr) m^{d-1} n \\
            &<\, \cov(d) \cdot m^{d-1} n + 3d m^{2d} \eps n \\
            &\le\, m^d n + 3d m^{2d} \eps n, 
    \end{align*}
    where the first inequality comes from the fact that $\Cov(i) \setminus \Cov(i-1)$ is the union of at most $3i \cdot m^d$ intervals. 
    
	Note that any copy of $F$ contained in $A$ can always be extended to a copy of $\tilde{F}$ by attaching appropriate edges from the lower layers around each of its vertices (since they are covered by edges from lower layers), thus forming a copy of $\tilde{F}$ inside $H$. Therefore, $A$ is $F$-free. 
        
	Partition $A$ into blocks: 
        \[ A = A_d^1 \cup (A_{d-1}^1 \cup \cdots \cup A_{d-1}^m) \cup \cdots \cup (A_1^1 \cup \cdots A_1^{m^{d-1}}) \]
    where $A_i^j = A \cap G_d^{i,j}$. 
    
	Since each $G_d^{i,j}$ is a disjoint union of copies of $\Gamma$, any $F$-free subgraph of $G_d^{i,j}$ has relative density at most $\rho(F) + \delta$. Hence, for any $i,j$, $|A_i^j| \le (\rho(F) + \delta) \cdot e(G_d^{i,j}) = (\rho(F) + \delta) \cdot m^{i-1}n$. Summing over all blocks, we obtain
		\[ |A| \,=\, \sum_{i = 1}^d \sum_{j = 1}^{m^{d-i}} |A_i^j| \,<\, \sum_{i=1}^{d} m^{d-i} \cdot (\rho(F) + \delta) m^{i-1} n \,=\, (\rho(F) + \delta) d m^{d-1} n. \]
	
	Combining the bounds for $|A|$ and $|B|$, we obtain
		\[ |A| + |B| \,<\, (\rho(F) + \delta) d m^{d-1} n + m^d n + 3d m^{2d} \eps n. \]
	By choosing $d$ sufficiently large and $\eps$ sufficiently small so that 
        \[ m^d n + 3d m^{2d} \eps n < \delta d m^{d-1} n, \]
    we have
		\[ |A| + |B| \,<\, (\rho(F) + 2\delta) d m^{d-1} n = (\rho(F) + 2\delta)e(G_d). \]
	This completes the proof. 
\end{proof}

We give the following Corollary as a simple application of Theorem~\ref{thm:loc-ext}

\begin{cor}
    For every integer $k \ge 1$, the matching $M_k = \bigl\{ \{1,3\}, \{2,5\}, \{4,7\}, \cdots, \{2k-4, 2k-1\}, \{2k-2, 2k\} \bigr\}$ has relative ordered Tur\'an density $0$. 
\end{cor}

\begin{figure}[ht]
	\begin{center}
		\begin{tikzpicture}[xscale = 0.9]
			\draw 
			\foreach \i in {1,...,14}{
				(\i,0)node[vtx,label=below:$\i$](v\i){}
			};
			\draw
			\foreach  \u/\v in {1/3, 2/5, 4/7, 6/9, 8/11, 10/13, 12/14}{
				(v\u) to[bend left] (v\v)
			};
		\end{tikzpicture}
	\end{center}
	\caption{Graph $M_7$.}
	\label{fig:M_7}
\end{figure}
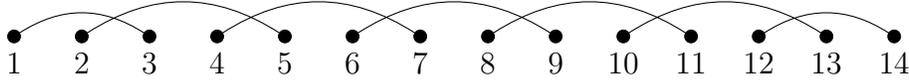

\begin{proof}
    Let $F_1 = \bigl\{ \{1,2\} \bigr\}$ and $F_{i+1} = \tilde{F_i}$ for $i \ge 1$. Then $M_k$ is a subgraph of $F_k$. From Theorem~\ref{thm:loc-ext}, we have $\rho(M_k) \le \rho(F_k) = \rho(F_1) = 0$. 
\end{proof}

\section{Conclusion}
While we found $Q_{a,b}$ satisfying \eqref{eq:question}, we did not actually determine $\rho(Q_{a,b})$.

\begin{question}
    Determine $\rho(Q_{a,b})$. 
\end{question}

Inspired by Observation~$\ref{obs:copies}$ and the specific matchings we considered, we also ask if these are merely specific cases of a broader behavior.
\begin{question}
	Is it always true that $\rho(F+_I F) = \rho(F)$, for ordered~$F$ on ~$[n]$ and~$I \in \binom{[2n]}{n}$? 
\end{question}
Finally, the results of~\cite{RRSS25} show that~$\ell(F)\geq 3 \implies \rho(F) >0$. On the other hand, the graphs with~$\ell(F) = 2$ are easy to describe. Namely, they are subgraphs of blowups of the ordered half graph~$H_t$ with~$V(H_t)=[t]$ and
    \[ E(H_t) = \bigl\{ \{i,j\} \colon i,j \in[t], \, i<j, \, i\equiv 1 \text{ and } j \equiv 0 \!\!\!\mod 2 \bigr\} \] 
for some~$t \in \NN$. Therefore showing that~$\rho(H_t) = 0$ for all~$t \in \NN$ would characterise those~$F$ with~$\rho(F) =0$ as those with~$\ell(F) = 2$. The first unknown case is the following.
\begin{question} \label{question:12-14-34}
    Determine~$\rho(H_4)$, where~$H_4$ has edges~$\{\{1,2\},\{1,4\},\{3,4\}\}$. 
\end{question}
Lior Gishboliner provided the following neat proof that~$\rho(H_4)=0$. Consider the ordered graph $G$ on $n$ vertices with $\{i,j\}\in E(G)$ whenever $|j-i|$ is a power of $3$. Then $e(G) \approx n \log n$. Suppose $G'$ is a subgraph of $G$ with at least $2n-1$ edges. For each vertex, delete the shortest edge to the left and to the right. This removes at most $2n-2$ edges, so some edge $\{x,y\}$ with $x<y$ remains. Restoring the deleted edges, there must exist $\{x,z\}$ with $x<z<y$ and $\{w,y\}$ with $x<w<y$. Since all edge lengths are powers of $3$, we have $y-x>(z-x)+(y-w)$, which implies that $\bigl\{\{x,z\},\{x,y\},\{w,y\}\bigr\}\subseteq G'$ is isomorphic to $H_4$. The next interesting case is~$H_6$.

Short of showing~$\rho(H_t) = 0$, one could ask for the more specific case of an arbitrary ordered matching~$M$, beyond those obtained by iterating Theorem~\ref{thm:loc-ext}.
\begin{question}
    Does~$\rho(M)=0$ hold for every ordered matching~$M$? 
\end{question}

\section*{Acknowledgments}
This project was started during  Graduate Research Workshop in Combinatorics 2025. The workshop was supported in part by NSF DMS-2152490, Barbara Jansons Professorship and Iowa State University. The authors thank Fares Soufan for presenting the problem during the workshop. 

\bibliographystyle{plain}
\bibliography{references}

\end{document}